\newtheorem{theorem}{Theorem}[section] 
\newtheorem{conjecture}[theorem]{Conjecture}
\newtheorem{corollary}[theorem]{Corollary}
\newtheorem{definition}[theorem]{Definition}
\newtheorem{example}[theorem]{Example}
\newtheorem{fact}[theorem]{Fact}
\newtheorem{lemma}[theorem]{Lemma}
\newtheorem{problem}[theorem]{Problem}
\newtheorem{proposition}[theorem]{Proposition}
\newtheorem{question}[theorem]{Question}
\newtheorem{remark}[theorem]{Remark}
\newcommand{\bcon}{\begin{conjecture}}
\newcommand{\econ}{\end{conjecture}}
\newcommand{\bcor}{\begin{corollary}}
\newcommand{\ecor}{\end{corollary}}
\newcommand{\bdf}{\begin{definition}}
\newcommand{\edf}{\end{definition}}
\newcommand{\beq}{\begin{equation}}
\newcommand{\eeq}{\end{equation}}
\newcommand{\bexa}{\begin{example}}
\newcommand{\eexa}{\end{example}}
\newcommand{\bfac}{\begin{fact}}
\newcommand{\efac}{\end{fact}}
\newcommand{\blem}{\begin{lemma}}
\newcommand{\elem}{\end{lemma}}
\newcommand{\bprb}{\begin{problem}}
\newcommand{\eprb}{\end{problem}}
\newcommand{\bpro}{\begin{proposition}}
\newcommand{\epro}{\end{proposition}}
\newcommand{\bque}{\begin{question}}
\newcommand{\eque}{\end{question}}
\newcommand{\brem}{\begin{remark}}
\newcommand{\erem}{\end{remark}}
\newcommand{\bthm}{\begin{theorem}}
\newcommand{\ethm}{\end{theorem}}
\newcommand{\be}{\begin}
\newcommand{\en}{\end}
\newcommand{\bpr}{\begin{proof}}
\newcommand{\epr}{\end{proof}}
\newcommand{\M}[1]{\operatorname{M(#1,\C)}}
\newcommand{\SL}[1]{\operatorname{SL(#1,\C)}}
\newcommand{\PSL}[1]{\operatorname{PSL(#1,\C)}}
\newcommand{\GL}[1]{\operatorname{GL(#1,\C)}}
\newcommand{\SO}[1]{\operatorname{SO(#1,\C)}}
\newcommand{\Sp}[1]{\operatorname{Sp(#1,\C)}}
\renewcommand{\sl}[1]{\mathfrak{sl(#1,\C)}}
\newcommand{\so}[1]{\mathfrak{so(#1,\C)}}
\renewcommand{\sp}[1]{\mathfrak{sp(#1,\C)}}
\newcommand{\lb}{\label}
\newcommand{\comment}[1]{\,}
\newcommand{\cal}{\mathcal}
\newcommand{\Z}{\mathbb Z}
\newcommand{\C}{\mathbb C}
\newcommand{\X}{\mathcal X}
\newcommand{\g}{\mathfrak g}
\renewcommand{\t}{\mathfrak t}
\newcommand{\T}{\mathbb T}
\newcommand{\ve}{\varepsilon}
\title{Character Varieties of Abelian Groups}
\author{Adam S. Sikora}
\thanks{The author acknowledges support from U.S. National Science Foundation grants DMS 1107452, 1107263, 1107367 "RNMS: GEometric structures And Representation varieties" (the GEAR Network).}
\keywords{character variety, moduli space, commuting elements in a Lie group}
\subjclass[2010]{
14D20, 
14L30, 
20G20, 
20C15 
13A50, 
14L24 
}
\begin{document}

\thispagestyle{empty}

\begin{abstract} We prove that for every reductive group $G$ with a maximal torus $\T$ and the Weyl group $W$, $\T^N/W$ is the normalization of the irreducible component, $X_G^0(\Z^N),$ of the $G$-character variety $X_G(\Z^N)$ of $\Z^N$ containing the trivial representation. We also prove that $X_G^0(\Z^N)=\T^N/W$ for all classical groups.

Additionally, we prove that even though there are no irreducible representations in $X_G^0(\Z^N)$ for non-abelian $G$, the tangent spaces to $X_G^0(\Z^N)$ coincide with $H^1(\Z^N, Ad\, \rho).$ Consequently, $X_G^0(\Z^2),$ has the ``Goldman" symplectic form for which the combinatorial formulas for Goldman bracket hold.
\end{abstract}

\pagestyle{myheadings}

\maketitle

%
\section{Introduction}
%

Let $G$ will be an affine reductive algebraic group over $\C.$\footnote{The field of complex numbers can be replaced an arbitrary algebraically closed field of zero characteristic throughout the paper.} For every finitely generated group $\Gamma$, the space of all $G$-representations of $\Gamma$ forms an algebraic set, $Hom(\Gamma,G),$ on which $G$ acts by conjugating representations.
The categorical quotient of that action $$X_G(\Gamma)=Hom(\Gamma,G)//G$$ is the $G$-character variety of $\Gamma,$ cf. \cite{LM, S-char} and the references within.
In this paper we study $G$-character varieties of free abelian groups.

For a Cartan subgroup (a maximal complex torus) $\T$ of $G$, the map
$$\T^N=Hom(\Z^N,\T)\to Hom(\Z^N,G)\to Hom(\Z^N,G)//G=X_G(\Z^N)$$ factors through
\beq\lb{e-chi}
\chi: \T^N/W\to X_G(\Z^N),
\eeq
where the Weyl group $W$ acts diagonally on $\T^N=\T\times ... \times \T$.
Thaddeus proved that for every reductive group $G$, $\chi$ is an embedding, \cite{Th}.
In this paper we discuss the image of this map and the conditions under which it is an isomorphism.
This is known to be a difficult problem. A version of it for compact groups is discussed for example in \cite{BFM}. (The connections between the algebraic and compact versions of this problem are discussed in \cite{FL}.) The version of this problem for algebraic groups is harder than that for compact ones, since a regular bijective function between algebraic varieties does not have to be an algebraic isomorphism.

Goldman constructed a symplectic form on an open dense subset of the set of equivalence classes of irreducible representations in $X_G(\pi_1(F)),$ for closed surfaces $F$ of genus $>1$, \cite{Go1}. In the second part of the paper, we extend Goldman's construction to the connected component of the identity of the $G$-character variety of $F$ torus, even though there are no irreducible representations in that component.

This paper was motivated by \cite{Th} and by our work, \cite{S-qt}, in which we relate deformation-quantizations of character varieties of the torus to the $q$-holonomic properties of Witten-Reshetikhin-Turaev knot invariants.

%
\section{Main results}
%

Let $X^0_G(\Z^N)=\chi(\T^N/W).$

\bthm[Proof in Sec. \ref{s_proofs}]\ \\ \lb{main}
(1) $X^0_G(\Z^N)$ is an irreducible component of $X_G(\Z^N)$.\\
(2) $\chi:\T^N/W\to X_G^0(\Z^N)$ is a normalization map for every $G$ and $N.$ (It was proved for $N=2$ in \cite{Th}.)\\
(3) $\chi: \T^N/W\to X_G^0(\Z^N)$ is an isomorphism for classical groups: $G=\GL{n},$ $\SL{n}, \Sp{n}, \SO{n}$ and for every $n$ and $N.$ ($\Sp{n}$ denotes the group of $2n\times 2n$ matrices preserving a symplectic form.)
\ethm

\brem 
(1) It is easy to show that $\chi$ is onto for $G=\SL{n}$ and $\GL{n},$ since one can conjugate every $G$-representation of $\Z^N$ arbitrarily close to representations into $\T$. That does not hold though for some other groups $G.$ For example, a representation sending $\Z^n$ onto the group of diagonal matrices $D$ in $\operatorname{O}(n,\C)=\{A: A\cdot A^T=I\}$ ($D=\{\pm 1\}^n$) for $n>3$ cannot be conjugated arbitrarily close to a representation into a maximal torus.

(2) $\chi$ being onto and $1$-$1$ does not imply that it is an isomorphism of algebraic sets. (For example, $x\to (x^2,x^3)$ from $\C$ to $\{(x,y): x^3=y^2\}\subset \C^2$ is a bijection which is not an isomorphism.)
\erem

\bprb 
Is $\chi$ is an isomorphism onto its image for Spin groups and the exceptional ones?
(By Theorem \ref{main}(2), $\chi$ is an isomorphism if and only if $X_G^0(\Z^N)$ is normal.)
\eprb


Here are a few basic facts about irreducible and connected components of $X_G(\Z^N)$.

\brem \lb{ir=con}
(1) $X_G(\Z)$ is irreducible, cf. \cite[\S 6.4]{St}.\\
(2) $X_G(\Z^2)$ is irreducible for every semi-simple simply-connected group $G,$ cf. \cite[Thm C]{Ric}.\\
(3) For every connected $G$, $X_G^0(\Z^2)$ coincides with the connected component of the trivial representation in $X_G(\Z^2),$ cf.\cite{Th}. (For completeness, a proof is enclosed in Sec. \ref{s_proofs2}.)
\erem

\bpro[Proof in Sec. \ref{s_proofs2}]\lb{irred}\ \\
$X_G(\Z^N)$ is irreducible for $G=\GL{n}, \SL{n}$ and $\Sp{n}$ for all $N$ and $n.$
\epro


%
\section{Irreducible representations of $\Z^N$}
\lb{s_irep}
%

Following \cite{S-char}, we say that $\rho:\Z^N\to G$ is irreducible if its image does not lie in a proper parabolic subgroup of $G.$ We say that $\rho:\Z^N\to G$ is completely reducible if for every parabolic subgroup $P\subset G$ containing $\rho(\Z^N)$, the image of $\rho$ lies in a Levi subgroup of $P$.

\bpro \lb{irrep}
For non-abelian $G$ there are no irreducible representations $\rho: \Z^N\to G$ with $[\rho]\in X^0_G(\Z^N).$
\epro

\bpr Assume that $\rho$ is irreducible and $[\rho]\in X^0_G(\Z^N).$ Every equivalence class in $X_G^0(\Z^N)$ contains a
representation $\phi: \Z^N\to \T\subset G$, and such $\phi$ is completely reducible. Since $\rho$ (being irreducible) is completely reducible and each equivalence class in $X_G(\Z^N)$ contains a unique conjugacy class of completely reducible representation, $\rho$ is conjugate to $\phi$. Hence, $\phi$ is irreducible. Therefore, $G=\T$, contradicting the assumption of $G$ being non-abelian.
\epr

\bcor
There are no irreducible representations of $\Z^2$ into simply-connected reductive groups.
\ecor

\bpr Every simply connected reductive algebraic Lie group is semi-simple. (That follows for example from two facts:  1. every reductive Lie algebra is a product of a semi-simple one and an abelian one. 2. There are no non-trivial simply-connected abelian reductive algebraic groups.) Now the statement follows from Remark \ref{ir=con}(2) and (3).
\epr

There are, however, irreducible representations of abelian groups into non-abelian ones.

\bexa 
Let $n\geq 3$ and let $\rho: \Z^{N}\to \SO{n}$ be a representation whose image contains all diagonal orthogonal matrices with entries $\pm 1$ in the diagonal. Then $\rho$ is irreducible, cf. \cite[Eg. 21]{S-char}.
\eexa

Another example was suggested to us by Angelo Vistoli:

\bexa
Let $g\in \PSL{n}$ be represented by the diagonal matrix with $1,\omega^1,...,\omega^n$ on the diagonal, where $\omega=e^{2\pi i/n},$ and let $h$ be represented by the permutation matrix associated with the cycle $(1,2,...,n).$
Then it is easy to see that $g$ and $h$ commute and to prove that $\rho: \Z^2\to \PSL{n}$ sending the generators of $\Z^2$ to $g$ and $h$ is irreducible.
\eexa

%
\section{\'Etale Slices and Chevalley sections}
%

Let $Hom^0(\Z^N,G)$ be the preimage of $X^0_G(\Z^N)$ under $\pi: Hom(\Z^N,G)\to X_G(\Z^N).$

\bthm[Proof in Sec. \ref{s_proofs3}]\lb{slice} If $\rho:\Z^N\to \T\subset G$ has a Zariski dense image in $\T$ then\\
(1) $X_G(\Z^N)$ is smooth at $\rho$ and $\rho$ belongs to a unique irreducible component of $X_G(\Z^N)$.\\
(2) $d\chi: T_\rho \T^N/W\to T_\rho\, X_G(\Z^N)$ is an isomorphism.\\
(3) A Zariski open neighborhood of $\rho$ in $\T^N=Hom(\Z^N,\T)$ is an \'etale slice at $\rho$ with respect to the $G$ action on $Hom^0(\Z^N,G)$ by conjugation.
\ethm

With $Hom(\Gamma,G)$ and $X_G(\Gamma),$ there are naturally associated algebraic schemes ${\cal Hom}(\Gamma,G)$ and ${\cal X}_G(\Gamma)={\cal Hom}(\Gamma,G)//G$ such that the coordinate rings, $\C[Hom(\Gamma,G)]$ and $\C[X_G(\Gamma)],$ are nil-radical quotients of the algebras of global sections of ${\cal Hom}(\Gamma,G)$ and of ${\cal X}_G(\Gamma),$ cf. \cite{S-char}.

For every completely reducible $\rho:\Z^N\to G$ there exists a natural linear map
\beq\lb{e_H^1-T}
\phi: H^1(\Z^N, Ad\, \rho)\to T_{[\rho]}\, \X_G(\Z^N)
\eeq
defined explicitly in \cite[Thm. 53]{S-char}, where the cohomology group has coefficients in the Lie algebra $\g$ of $G$, twisted by $\rho$ composed with the adjoint representation of $G.$
Although this map is not an isomorphism in general,
it is known to be one for good $\rho,$ cf. \cite[Thm. 53]{S-char}. As we have seen in the previous section, there are no irreducible representations in $X_G^0(\Z^N)$. Nonetheless, Theorem \ref{slice} implies the following result which will be used in Section \ref{s_symp}:

\bcor[Proof in Sec. \ref{s_proofs3}]\ \\ \lb{T=H}
For every $\rho: \Z^N\to \T\subset G$  such that $\rho(\Z^N)$ is Zariski dense in $\T$, the map (\ref{e_H^1-T}) is an isomorphism.
\ecor

One says that a subvariety $S$ of an algebraic variety $X$ is a Chevalley section with respect to a $G$-action on $X$,
if the natural map $S//N(S)\to X//G$ is an isomorphism, where $N(S)=\{g\in G: gS=S\}$, cf. \cite[Sec 3.8]{PV}.
For example, any maximal torus in $G$ is a Chevalley section of $G$ with respect to the $G$-action by conjugation.

The crucial question of whether $\chi: \T^N/W\to X_G^0(\Z^N)$ is an isomorphism is equivalent to the question whether $Hom(\Z^N,\T)$ is a Chevalley section of $Hom^0(\Z^N,G)=\pi^{-1}(X_G^0(\Z^N))$ under the $G$ action by conjugation.

%
\section{More on connected components of $X_G(\Z^N)$ for semi-simple $G$}
%

Assume now that $G$ is semi-simple. Then $\pi_1(G)$ is finite and the central extension
$$\{e\} \to \pi_1(G)\to \bar G\to G\to \{e\},$$
where $\bar G$ is the universal cover of $G$, defines an element $\tau\in H^2(G,\pi_1(G))$, cf. \cite[Thm IV.3.12]{Br}. (Since the extension is central, the action of $G$ on $\pi_1(G)$ is trivial.) Hence, every representation $\rho: \Z^N\to G$ defines $\rho^*(\tau)\in H^2(\Z^N,\pi_1(G)).$
By the universal coefficient theorem,
$$H^2(\Z^N,\pi_1(G))=Hom(H_2(\Z^N),\pi_1(G))=Hom(\Z^{N\choose 2},\pi_1(G))=\pi_1(G)^{N \choose 2}.$$

The map $\rho \to \rho^*(\tau)$ is continuous on $Hom(\Z^N,G)$ and it is invariant under the conjugation by $G$. Therefore, its restriction to completely reducible representations $Hom^{cr}(\Z^N,G)\subset Hom(\Z^N,G)$ factors through a continuous map $Hom^{cr}(\Z^N,G)/G=X_G(\Z^N)\to \pi_1(G)^{N \choose 2}.$
Since this map is constant on connected components of $X_G(\Z^N)$, it yields
$$\Psi: \pi_0(X_G(\Z^N))\to H^2(\Z^N,\pi_1(G)).$$
\bpro \lb{Z^2-comp}
$\Psi$ is a bijection for $G$ connected and $N=2.$
\epro

Following \cite{BFM}, we say that $(g_1,g_2)\in {\bar G}^2$ is a $c$-pair if $[g_1,g_2]=c \in C(\bar G),$ the center of $\bar G.$

\noindent {\it Proof of Proposition \ref{Z^2-comp}:}
Since $G$ is connected, the connected components of $Hom(\Z^2,G)$ are in a natural bijection with those of $X_G(\Z^N).$ Let $K$ be the compact form of $G$. By \cite{FL}, the map $Hom(\Z^2,K)\to Hom(\Z^2,G)$ induced by the embedding $K\to G$ is a bijection on connected components. By Cartan decomposition, $K$ is a deformation retract of $G$ and, consequently, $\pi_1(K)=\pi_1(G).$ Therefore, it is enough to show that the corresponding map $\pi_0(Hom(\Z^2,K))\to \pi_1(K)$ is a bijection.   The representations $\rho:\Z^2\to K$ with $\Psi(\rho)=c\in H^2(\Z^2,\pi_1(K))=\pi_1(K)\subset C(\bar K)$ correspond to $c$-pairs in $\bar K,$ cf. \cite{BFM}. By \cite[Thm. 1.3.1]{BFM}, the space of $c$-pairs for any given $c\in \pi_1(K)$ is non-empty and connected.
\qed\ \\

$\Psi$ is a bijection between $\pi_0(X_G(\pi_1(F)))$ and $\pi_1(G)$ for closed orientable surfaces $F$ of genus $>1$ as well, cf. \cite{Li}.

$\Psi$ is generally not $1$-$1$ for $N>2.$ For example, $X_G(\Z^N)$ is disconnected for
$N>2$ and for all simply-connected groups $G$ other than the products of $\SL{2}$ and of $\Sp{n},$  \cite{FL,KS}.

Denote by $X_G^c(\Z^2)$ the connected component of $X_G(\Z^2)$ with the $\Psi$-value $c\in \pi_1(G).$
Identify $\pi_1(G)$ with a subgroup of the center of $\bar G,$ $C(\bar G).$  The group $G$ acts on on the set of all $c$-pairs, $M_G^c\subset {\bar G}^2,$ and it is easy to see that the natural map
$$\phi_c: M_G^c//{\bar G}\to X_G(\Z^2)$$
is a finite algebraic map.
Let us analyze $M_G^c//{\bar G}$ further following the approach of \cite{BFM}:
Any element $c\in C(\bar G)$ acts on $\T.$ Let $S^c\subset \T$ be the connected component of identity in the invariant part of $c$ action on $\T.$ Let $S'$ be the subtorus of $\T$ determined by the orthogonal component of the Lie algebra of $S$ in the Lie algebra of $\T,$ with respect to the Killing form.
Then $F_S=S\cap S'$ is a finite group.
Following \cite[Thm 1.3.1]{BFM}, it is easy to show that there is a regular map,
$$\theta_c: ((S/F_S)\times (S/F_S))/W\to M_G^c,$$
where $W$ is the quotient of the normalizer of $S$ by its centralizer in $G.$

\bprb 
Is $M_G^c$ is irreducible? Is $\theta_c$ a normalization map? Is it an isomorphism?
\eprb

%
\section{Symplectic nature of the character varieties of the torus}
\lb{s_symp}
%

Goldman constructed a symplectic form on the set of equivalence classes of ``good" representations in $X_G(\pi_1(F))$ for every reductive $G$ and for every closed orientable surface $F.$
Motivated by applications to quantum topology, \cite{S-qt}, we are going to extend his construction to tori.

Goldman's approach relies on identifying the tangent space, $T_{[\rho]}\, X_G(\pi_1(F))$, at an irreducible $\rho$ with $H^1(F, \g),$ where $\g$ is the Lie algebra of $G$, and the coefficients in this cohomology are twisted by $Ad\, \rho$, cf. \cite{Go1, Go2}. (One needs an additional assumption that the stabilizer of $\rho(\pi_1(F))\subset G$ coincides with the center of $G$, \cite{S-char}.)
Although his construction does not extend to $X_G^0(\Z^2)$ (i.e. torus), since, as shown in Sec. \ref{s_irep}, no representation in that component of character variety is irreducible for non-abelian $G$, we resolved that difficulty with our Corollary \ref{T=H}.

Let $X_G'(\Z^N)=Hom'(\Z^N,G)//G$, where $Hom'(\Z^N,G)$ is the space of $G$-representations of $\Z^N$ with a Zariski dense image in a maximal torus of $G.$ (Since all representations in $Hom'(\Z^N,G)//G$ are completely reducible, it is the set-theoretic quotient.)
Let $N=2$.  The composition of the cup product
$$H^1(\Z^2, Ad\, \rho)\times H^1(\Z^2, Ad\, \rho)\to H^1(\Z^2, Ad\, \rho\otimes Ad\, \rho)$$
with the map
$$H^1(\Z^2, Ad\, \rho\otimes Ad\, \rho)\to H^2(\Z^2,\C)=\C$$
induced by an $Ad\, G$-invariant  symmetric, non-degenerate bilinear form,
$$\frak B:\g \times \g\to \C,$$
defines a skew-symmetric pairing
\beq\lb{e-omega}
\omega: H^1(\Z^2, Ad\, \rho)\times H^1(\Z^2, Ad\, \rho)\to \C.
\eeq
By Corollary \ref{T=H}, this pairing defines a differential $2$-form on $X_G'(\Z^N)$.
We claim that $\omega$ is symplectic. Let us precede the proof with a construction of another closely related form:
Let $\omega'$ be the $2$-form on $T_{(e,e)} \T\times \T=\t\times \t$ defined by
$$\omega'((v_1,w_1),(v_2,w_2))= \frak B(v_1,w_2)-\frak B(v_2,w_1).$$  It defines an invariant $2$-form on $\T\times \T$ which descends to a non-degenerate skew-symmetric $2$-form on $(\T\times \T)/W$. (Recall that $W$ acts on $\T$ and, by extension, it acts diagonally on $\T\times \T.$)

\bpro[Proof in Sec. \ref{s_proofs3}]\ \\ \lb{om=om'}
\noindent (1) The pullback of $\omega$ through $\chi: \T^2/W\to X_G^0(\Z^2)$ coincides with $\omega'$.\\
(2) Both $\omega$ and $\omega'$ are symplectic.
\epro

The most obvious choice for $\frak B$ is the Killing form, $\frak K.$ However, it is also useful to consider the trace form $\frak T(A,B)=Tr(AB)$ for classical Lie algebras $\g$ with their natural representations by matrices, $\sl{n},\so{n}\subset \M{n},$ and $\sp{n}\subset \M{2n}$. In that case, $\frak K=c_{\g} \cdot \frak T,$ where
$$c_{\sl{n}}=2n,\quad c_{\so{n}}=n-2,\quad c_{\sp{n}}=2n+2.$$

Our construction of $\omega$ is an exact analogue of that of Goldman's symplectic form for character varieties of surfaces of higher genera, except for the fact that it is a holomorphic form (defined using the form $\frak B$) rather than a real form (defined by the real part of $\frak B$), cf. \cite{S-char}. Therefore, it is not surprising to see most of the methods and results of \cite{Go2} apply to character varieties of tori as well, cf. our Proposition \ref{Goldman-formulas}. For example, here is a version of Goldman's combinatorial formulas for Poisson brackets for the character varieties of the torus.

\begin{proposition}[Proof in Sec. \ref{s_proof_symp}]\label{G-brack}
Let $\{\cdot,\cdot\}$ be the Poisson bracket on $\C[X_G^0(\Z^2)]$ induced by $\omega$ defined by a form $\frak B=c \cdot \frak T$, where $\frak T$ is the trace form and $c\in \C^*$. Let $\tau_g: X_G^0(\Z^2)\to \C$ be defined by $\tau_g([\rho])=Tr \rho(g).$ Then for any $p,q,r,s\in \Z,$
$$\{\tau_{(p,q)},\tau_{(r,s)}\}= \frac{1}{c} \left|\begin{array}{cc} p & q \\ r & s\\ \end{array}\right|
\left(\tau_{(p+r,q+s)}-\frac{\tau_{(p,q)}\tau_{(r,s)}}{n}\right),\ \text{for\ } G=\SL{n},$$
and
$$\{\tau_{(p,q)},\tau_{(r,s)}\}= \frac{1}{2c}\left|\begin{array}{cc} p & q \\ r & s\\ \end{array}\right|
\left(\tau_{(p+r,q+s)}-\tau_{(p-r,q-s)}\right)\ \text{for}\ G=\SO{n}, \Sp{n}.$$
\end{proposition}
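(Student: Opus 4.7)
The plan is to transport the computation to $\T^2/W$ via $\chi$, which is an isomorphism for classical $G$ by Theorem~\ref{main}(3), and then to work on $\T^2$ itself, where Proposition~\ref{om=om'} gives the explicit translation-invariant form $\omega'((v_1,w_1),(v_2,w_2))=\frak B(v_1,w_2)-\frak B(v_2,w_1)$ on the tangent space $\t\oplus\t$. Since the $\tau_{(p,q)}$ are $W$-invariant, their bracket computed on $\T^2$ will descend to $\C[\T^2/W]=\C[X_G^0(\Z^2)]$.

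First I would compute the differential of $\tau_{(p,q)}$ at $\rho=(A,B)\in\T^2$. Deforming by $\rho_t=(e^{tv}A,\,e^{tw}B)$ with $(v,w)\in\t\oplus\t$, commutativity inside $\T$ yields $\rho_t(p,q)=\exp\!\bigl(t(pv+qw)\bigr)\rho(p,q)+O(t^2)$, so
\[
d\tau_{(p,q)}(v,w)=\operatorname{Tr}\!\bigl((pv+qw)\,\rho(p,q)\bigr),
\]
the trace being taken in the defining representation of $G$. The Hamiltonian vector field $X_{\tau_{(r,s)}}=(v_X,w_X)\in\t\oplus\t$ is characterized by $\omega'(X_{\tau_{(r,s)}},\cdot)=d\tau_{(r,s)}(\cdot)$; using $\frak B=c\,\frak T$ with $\frak T(X,Y)=\operatorname{Tr}(XY)$ and matching coefficients of $v$ and $w$ separately gives $v_X=(s/c)\,\xi_{(r,s)}$ and $w_X=-(r/c)\,\xi_{(r,s)}$, where $\xi_{(r,s)}\in\t$ is the unique element satisfying $\operatorname{Tr}(\xi_{(r,s)}\,v)=\operatorname{Tr}(\rho(r,s)\,v)$ for every $v\in\t$ (well posed since $\frak T|_{\t\times\t}$ is non-degenerate for classical $\g$). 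Substituting,
\[
\{\tau_{(p,q)},\tau_{(r,s)}\}=d\tau_{(p,q)}\!\bigl(X_{\tau_{(r,s)}}\bigr)=\frac{ps-qr}{c}\,\operatorname{Tr}\!\bigl(\xi_{(r,s)}\,\rho(p,q)\bigr).
\]

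The remaining work is a case-by-case identification of this trace. For $G=\SL{n}$, $\t$ is the full space of diagonal traceless matrices, so $\xi_{(r,s)}=\rho(r,s)-(\tau_{(r,s)}/n)\,I$ and $\operatorname{Tr}(\xi_{(r,s)}\rho(p,q))=\tau_{(p+r,q+s)}-\tau_{(p,q)}\tau_{(r,s)}/n$. For $G=\SO{n}$ or $\Sp{n}$, taking $\T$ to be diagonal of the form $\operatorname{diag}(a_1,\dots,a_k,a_k^{-1},\dots,a_1^{-1})$ (with a central $1$ in the odd orthogonal case) and $\t$ to be $\operatorname{diag}(v_1,\dots,v_k,-v_k,\dots,-v_1)$, the identity for $\xi_{(r,s)}$ forces $(\xi_{(r,s)})_i=\tfrac12(a_i^rb_i^s-a_i^{-r}b_i^{-s})$, and the four-term expansion of $\operatorname{Tr}(\xi_{(r,s)}\rho(p,q))$ telescopes to $\tfrac12\bigl(\tau_{(p+r,q+s)}-\tau_{(p-r,q-s)}\bigr)$, producing the factor $1/(2c)$.

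The only subtle point is disentangling the two different roles of the matrix trace: one as the $Ad$-invariant form $\frak T$ on $\g$ used to build $\omega$, the other as the evaluation map $g\mapsto\operatorname{Tr} g$ defining the $\tau_{(p,q)}$ on $G$. The factor $1/2$ in the $\SO{n}$ and $\Sp{n}$ formulas is exactly the mismatch between these two uses: each pair $(a_i,a_i^{-1})$ in $\T$ provides one coordinate of $\t$ but contributes two diagonal entries to the defining-representation trace.
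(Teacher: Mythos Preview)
Your argument is correct and takes a genuinely different route from the paper. The paper proves the more general Proposition~\ref{Goldman-formulas} (Goldman bracket formulas for closed orientable surfaces of all genera $\geq 1$) by invoking Goldman's Product Formula together with an explicit computation of the variation function $F:G\to\g$ for each classical group (Lemma~\ref{l_var}); the torus case then drops out because the signed intersection number of $(p,q)$ and $(r,s)$ on $T^2$ is the determinant $ps-qr$. You instead bypass the surface-theoretic machinery entirely: using Theorem~\ref{main}(3) and Proposition~\ref{om=om'} you transport the problem to $\T^2$ with its explicit constant form $\omega'$, compute $d\tau_{(p,q)}$ and the Hamiltonian vector field in torus coordinates, and identify the projection $\xi_{(r,s)}\in\t$ case by case. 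What your approach buys is self-containment: it relies only on results proved inside the paper and sidesteps the question of why Goldman's Product Formula (originally stated for genus $>1$, where irreducibles exist) extends to the genus-one component. What the paper's approach buys is the unification with higher genus and the conceptual link to Goldman's variation functions; note in particular that your projection $\xi_{(r,s)}$ is exactly $F(\rho(r,s))$ for the variation function of Lemma~\ref{l_var} restricted to $\T$, so the two computations are ultimately the same identity read in different languages.
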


%
\section{Proof of Theorem \ref{main}} 
\lb{s_proofs}
%


\bpro[cf. \cite{Th}] \lb{1-1}
(1) $\chi$ is $1$-$1$\\
(2) $\chi$ is finite.
\epro

\bpr (1) The proof is an extension of the arguments of \cite{B1} and of \cite{Th}:
Let $\rho,\rho':\Z^N\to \T$ be equivalent in $X_G(\Z^N).$ We prove first that $\rho$ and $\rho'$ are conjugate. Since the algebraic closures of $\rho(\Z^N)$ and of $\rho'(\Z^N)$
are finite extensions of tori, they are linearly reductive and, hence, by \cite[Prop. 8]{S-char},  $\rho$ and $\rho'$ are completely reducible representations of $\Z^N$ into $G.$ Since the orbit of the $G$-action by conjugation on a completely reducible representation in $Hom(\Z^N,G)$ is closed, cf. \cite[Thm. 30]{S-char}, we see that $\rho'=g\rho g^{-1},$ for some $g\in G.$
The centralizer of $\rho(\Z^N),$ $Z(\rho(\Z^N))\subset G$ is a reductive group
by \cite[26.2A]{Hu} since the proof there is valid not only for a subtorus but for any
subset. Clearly $\T\subset Z(\rho(\Z^N)).$ Since the elements of $\T$ commute with elements of $\rho'(\Z^N)=g\rho(\Z^N) g^{-1}$,
the elements of $g^{-1}\T g$ commute with those of $\rho(\Z^N)$ and, hence, $g^{-1}\T g \subset Z(\rho(\Z^N)).$
Since $\T$ and $g^{-1}\T g$ are maximal tori in $Z(\rho(\Z^N)),$ there is $h\in Z(\rho(\Z^N))$ such that
$h^{-1}g^{-1}\T gh=\T.$ This conjugation on $\T$ coincides with the action of an element $w$ of the Weyl group on $\T.$
Since
$$w\cdot \rho'(x)=h^{-1}g^{-1}\rho'(x) gh=h^{-1}\rho(x) h=\rho(x)$$
for every $x\in \Z^N,$ the statement follows.

(2) We follow \cite{Th}: The map $\T^N/W \to \T/W\times ...\times \T/W$ is finite. Since it factors through
$$\T^N/W\to X_G(\Z^N)\to X_G(\Z)\times ...\times X_G(\Z)\to \T/W\times ...\times \T/W,$$
and the right map is an isomorphism, the map $\T^N/W\to X_G(\Z^N)$ is finite, cf. \cite[Lemma 2.5]{Ka}.
\epr

\begin{lemma}\label{H^1-Z^N}
(1) For every non-trivial homomorphism $\psi:\Z^N\to \C^*,$\\
$H^1(\Z^N,\psi)=0.$\\
(2) Let $\g$ and $\t$ be the Lie algebras of $G$ and of a maximal torus $\T$ in $G$, respectively. If $\rho: \Z^N\to \T$ is a representation whose image does not lie in $Ker\, \alpha$, for any root $\alpha$ of $\g$, then the embedding $\t\subset \g$ induces an isomorphism
$$\t^{N}=H^1(\Z^N,\t)\to H^1(\Z^N, Ad\, \rho).$$
\end{lemma}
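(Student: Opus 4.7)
The plan is to dispatch part (2) by a root-space decomposition that reduces it to part (1), and to prove part (1) by a direct Künneth-style computation. Part (1) is the heart of the matter, and it in turn reduces to the one-variable case $N=1$, where the statement is an elementary cocycle computation.

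For part (1), I would first observe that any character $\psi:\Z^N\to \C^*$ splits over the standard decomposition $\Z^N=\Z\oplus\cdots\oplus\Z$: setting $\lambda_i=\psi(e_i)$, the $\Z^N$-module $\C_\psi$ is the external tensor product $\C_{\psi_1}\boxtimes\cdots\boxtimes \C_{\psi_N}$, where $\psi_i:\Z\to\C^*$ sends $1\mapsto \lambda_i$. The Künneth formula for group cohomology over the field $\C$ then gives
$$H^*(\Z^N,\C_\psi)\;\cong\;\bigotimes_{i=1}^N H^*(\Z,\C_{\psi_i}).$$
For each factor, $H^*(\Z,\C_{\psi_i})$ is the cohomology of the two-term complex $\C\xrightarrow{\lambda_i-1}\C$, which is acyclic whenever $\lambda_i\neq 1$. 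Since $\psi$ is nontrivial, at least one $\lambda_i\neq 1$, so the whole tensor product vanishes; in particular $H^1(\Z^N,\psi)=0$. (Alternatively, one can compute directly with crossed homomorphisms $c:\Z^N\to \C$ satisfying $c(x+y)=c(x)+\psi(x)c(y)$ and show that the coordinate $c(e_i)$ with $\lambda_i\neq 1$ can be absorbed into a coboundary, after which the cocycle condition forces $c\equiv 0$; but the Künneth route is cleaner.)

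For part (2), the key point is that the root-space decomposition $\g=\t\oplus\bigoplus_\alpha \g_\alpha$ is a decomposition of $\Z^N$-modules under $Ad\,\rho$, because $\rho$ factors through $\T$, which acts trivially on $\t$ (it is abelian) and by the character $\alpha$ on the one-dimensional root space $\g_\alpha$. Hence
$$H^1(\Z^N, Ad\,\rho)\;=\;H^1(\Z^N,\t)\;\oplus\;\bigoplus_\alpha H^1(\Z^N,\alpha\circ\rho).$$
The first summand is $\mathrm{Hom}(\Z^N,\t)=\t^N$ since the action on $\t$ is trivial. The hypothesis $\rho(\Z^N)\not\subset\ker\alpha$ says exactly that each character $\alpha\circ\rho:\Z^N\to\C^*$ is nontrivial, so part (1) kills each of the remaining summands. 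The map induced by $\t\hookrightarrow\g$ is then the identification of the first summand with the whole cohomology, proving the isomorphism.

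No substantial obstacle is expected. The only point requiring a bit of care is confirming that Künneth applies in the group-cohomology setting with nontrivial coefficient systems; over the field $\C$ this is standard, following from the external tensor product of bar (or Koszul) resolutions. Everything else is bookkeeping with the root-space decomposition.
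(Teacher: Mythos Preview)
Your proof is correct, and part~(2) is essentially identical to the paper's argument: both use the root-space decomposition of $\g$ as a $\Z^N$-module under $Ad\,\rho$ and then invoke part~(1) on each root-space summand.

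For part~(1), you take a different route than the paper. The paper uses exactly the direct crossed-homomorphism computation that you mention as an alternative: pick $v$ with $\psi(v)\ne 1$, apply the cocycle identity $\sigma(v+w)=\sigma(v)+\psi(v)\sigma(w)=\sigma(w)+\psi(w)\sigma(v)$ to solve $\sigma(w)=(\psi(w)-1)\cdot\sigma(v)/(\psi(v)-1)$, which is visibly the principal derivation with parameter $m=\sigma(v)/(\psi(v)-1)$. Your K\"unneth argument is more structural and actually proves the stronger statement $H^*(\Z^N,\psi)=0$ in all degrees, at the cost of invoking a general theorem. The paper's computation is entirely self-contained and fits in three lines, which is presumably why the author chose it; your approach has the advantage of making the vanishing transparent from the $N=1$ case without any cocycle manipulation. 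Either is perfectly adequate here.
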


\begin{proof}
(1) The first cohomology group is the quotient of the space of derivations
\begin{equation}\label{Z-der}
\sigma: \Z^N\to \C,\quad \sigma(a+b)=\sigma(a)+\psi(a)\sigma(b)
\end{equation}
by the principal derivations,
$$\sigma_m(a)=(\psi(a)-1)\cdot m,$$
for some $m\in \C.$

If $\psi(v)\ne 0$, for some $v\in \Z^N$ then for every $w\in \Z^N,$
$$\sigma(v)+\psi(v)\sigma(w)=\sigma(v+w)=\sigma(w)+\psi(w)\sigma(v).$$
Hence $$\sigma(w)=(\psi(w)-1)\sigma(v)/(\psi(v)-1)$$ and
$\sigma$ is the principal derivation $\sigma_m$ for $m=\sigma(v)/(\psi(v)-1).$

(2) Consider a root decomposition of $\g$,
$$\g=\t\oplus \bigoplus_\alpha \g_\alpha,$$
where the sum is over all roots of $\g$ relative to $\t$ and $\g_\alpha$'s are root subspaces of $\g,$
\cite[8.17]{B2}. Since the image of $\rho$ lies in $\T$, this root decomposition is $Ad\, \rho$ invariant.
Therefore, $$H^1(\Z^N, Ad\, \rho)=H^1(\Z^N,(\t)_{Ad \rho})\oplus \bigoplus_\alpha H^1(\Z^N,(\g_\alpha)_{Ad \rho}).$$
The $Ad\, \rho$ action on $\t$ is trivial. On the other hand, every $v \in \Z^N$ acts on $\g_\alpha$ by the multiplication by  $\alpha(\rho(v)).$ Now the statement follows from (1).
\end{proof}

\noindent {\bf Proof of Theorem \ref{main}:}
(1) By Prop \ref{1-1}(2), $\chi$ is finite and, hence, its image is closed. Since $\T^N/W$ is irreducible, also $X_G^0(\Z^N)=\chi(\T^N/W)$ is irreducible and, consequently, it is contained in an irreducible component $Z$ of $X_G(\Z^N)$.
It is enough to show that $dim\, Z=dim\, X_G^0(\Z^N).$

Consider a representation $\rho:\Z^N\to \T\subset G$ with a Zariski dense image in $\T$.
We have
$$T_\rho\, {\cal Hom}(\Z^N,G)\simeq Z^1(\Z^N, Ad\, \rho)\simeq H^1(\Z^N,Ad\, \rho)\oplus B^1(\Z^N,Ad\, \rho),$$ by \cite[Thm 35]{S-char}.
The first summand has dimension $N\cdot rank\, G$, by Lemma \ref{H^1-Z^N}. The second, composed of functions $\sigma_v: \Z^N\to \g$ of the form
$$\sigma_v(w)=(Ad\, \rho(w)-1)v,$$
for some $v\in \g$, has dimension $\dim\, \g-rank\, \g.$ (Indeed, $\sigma_v=0$ for $v\in \t$ while $\sigma_v$'s are linearly independent for basis elements $v$ of a subspace of $\g$ complementary to $\t.$)
As before, let $Hom^0(\Z^N,G)=\pi^{-1}(X_G^0(\Z^N)).$ Then
\beq\lb{e-T}
dim\, Hom^0(\Z^N,G)\leq dim\, T_\rho\, {\cal Hom}(\Z^N,G)=N\cdot rank\, G+dim\, G-rank\, G.
\eeq
$X_G(\Z^N)$ is the quotient of $Hom^0(\Z^N,G)$ by the action of $G$ with the stabilizer of dimension $rank\, G$ at $\rho.$ Since the stabilizer dimension is a upper semi-continuous function, cf. \cite[Sec. 7]{PV},
the stabilizers near $\rho$ have dimensions at least $rank\, G$. Therefore,
\beq\lb{e_1st_in}
dim\, Z\leq dim\, Hom^0(\Z^N,G)-(dim\, G-rank\, G)\leq N\cdot rank\, G.
\eeq
However, since $\chi$ is an embedding of a variety of dimension $N\cdot rank\, G$ into $X_G^0(\Z^N)$,
$$N\cdot rank\, G\leq dim\, X_G^0(\Z^N)\leq dim\, Z.$$
This inequality together with (\ref{e_1st_in}) implies the statement.

(2) By Proposition \ref{1-1}(1) $\chi$ is $1$-$1$. Hence, $\chi: \T^N/W\to X_G^0(\Z^N)$ is birational, by \cite[Prop 3.17]{Mu}.
Since $\chi$ is finite, $\chi$ is a normalization map, cf. \cite[II.\S 5]{Sh}.\\
(3) By Proposition \ref{class_onto}, $\chi_*$ is onto. Since $\T^N/W\to X_G(\Z^N)$ factors though $X_G^0(\Z^N),$ also
$\chi_*: \C[X_G^0(\Z^N)]\to \C[\T^N/W]$ is onto. Since $\chi$ is onto $X_G^0(\Z^N)$, $\chi_*$ is $1$-$1$ and, hence, an isomorphism.
\qed\ \\


The proof of Theorem \ref{main}(3) above relies on the following:

\bpro\lb{class_onto} For $G=\GL{n}, \SL{n}, \Sp{n}, \SO{n}$ and every $n$ and $N$,
the dual map $\chi_*: \C[X_G(\Z^N)]\to \C[\T^N/W]=\C[\T^N]^W$ is onto.
\epro

\bpr
The coordinate ring of a maximal torus in a reductive algebraic group can be identified with the group ring, $\C\Lambda$, of the weight lattice, $\Lambda,$ of $G.$ Following \cite[\S 23.2]{FH}, we have

(a) If $G=\GL{n}$ then
$$\C[\T^N]=\C[x_{ij}^{\pm 1}, 1\leq i\leq n, 1\leq j\leq N].$$

(b) If $G=\SL{n}$ then
$$\C[\T^N]=\C[x_{ij}^{\pm 1}, 1\leq i\leq n, 1\leq j\leq N]/ I,$$
where $I$ is generated by $\prod_{i=1}^n x_{ij}-1$ for $1\leq j\leq N.$

(c) If $G=\Sp{n}$ then
$$\C[\T^N]=\C[x_{ij}^{\pm 1}, 1\leq i\leq n, 1\leq j\leq N]/I.$$

(d) If $G=\SO{2n}$ then
$$\C[\T^N]=\C[x_{ij}^{\pm 1}, (x_{1j}\cdot ...\cdot x_{nj})^\frac{1}{2}, 1\leq i\leq n, 1\leq j\leq N].$$

(e) If $G=\SO{2n+1}$ then
$$\C[\T^N]=\C[x_{ij}^{\pm 1}, 1\leq i\leq n, 1\leq j\leq N].$$
(Note that $(x_{1j}\cdot ...\cdot x_{nj})^\frac{1}{2}$ is a weight of $Spin(2n+1,\C)$ but not of $\SO{2n+1}$.)


Hence each monomial in variables $x_{ij}$ is of a form
$$m=\prod_{i=1}^n x_i^{\alpha_i},$$
where $\alpha_i=(\alpha_{i1},...,\alpha_{iN})$ are in $\Z^N$ for $G=\GL{n}, \SL{n}, \Sp{n},\SO{2n+1}$
and in $(\frac{1}{2}\Z)^N$ for $G=\SO{2n}.$
(For $G=\SL{n}, \Sp{n}$ such presentation of $m$ is not unique.)

We say that $m$ is a monomial of level $l$ if it has a presentation with $l$ non-vanishing alphas, $\alpha_{i_1},...,\alpha_{i_l}$,
and it has no presentation with $l-1$ non-vanishing alphas. We say that an element of $\C[\T^N]$ is of level $l$ if it is a linear combination of monomials of level $\leq l$ but not a linear combination of monomials of level $<l.$

In each of the above cases, the Weyl group is a subgroup of the signed symmetric group, $SS_n=S_{n}\rtimes (\Z/2)^n,$ and the Weyl group action on $\C[\T^N]$ extends to that of $SS_n$ on $\C[\T^N]$ by permuting the first indices of $x_{ij}$ and negating exponents of these variables, depending on the value of $i.$

Let $\tau_\alpha$, for $\alpha\in \Z^N$, be the function on $\C[X_G(\Z^N)]$ sending the equivalence class of
$\rho:\Z^N\to G$ to $Tr(\rho(\alpha))$.
(By Theorems 3 and 5 of \cite{S-gen}, $\C[X_G(\Z^N)]$ is generated by functions $\tau_\alpha$ for
 $G=\GL{n}, \SL{n},\Sp{n},\SO{2n+1},$ for all $n.$)
Note that
$$\chi_*(\tau_\alpha)=\begin{cases} \sum_{i=1}^n x_i^\alpha & \text{for\ } G=\GL{n}, \SL{n},\\
\sum_{i=1}^n x_i^\alpha+x_i^{-\alpha} & \text{for\ } G=\Sp{n},\\
\sum_{i=1}^n x_i^\alpha+x_i^{-\alpha}+1, & \text{for\ } G=\SO{2n+1}.\\
\end{cases}$$
Hence $\chi_*(\tau_\alpha)$ is a constant plus a non-zero scalar multiple of $\sum_{w\in W} w\cdot x_i^\alpha.$
Consequently, $\chi_*(\C[X_G(\Z^N)])$ contains all elements of level $1$ in $\C[\T^N]^W.$
Therefore, it is enough to prove that $\C[\T^N]^W$ is generated by such elements. That follows by induction from
Lemma \ref{level_red}.

Let $G=\SO{2n}$ now.
By \cite[Thm 6]{S-gen}, $\C[X_G(\Z^N)]$ is generated by functions $\tau_\alpha$ and by the functions
$Q_{2n}(\alpha_1,...,\alpha_n)$, for $\alpha_1,..,\alpha_n\in \Z^N.$
The homomorphism $\chi_*$ maps $\tau_\alpha$ to $\sum_{i=1}^n (x_i^{\alpha_i}+x_i^{-\alpha_i}).$
Therefore, it is enough to prove that $\C[\T^N]^W$ is generated by elements of level $1$ and by the elements
$\chi_*(Q_{2n}(\alpha_1,...,\alpha_n))$ for $\alpha_1,...,\alpha_n\in \Z^N$. These latter elements are written explicitly in the lemma below. The statement of Proposition \ref{class_onto} follows now by induction
from Lemma \ref{level_red}.
\epr

\blem 
For every $\alpha_1,...,\alpha_n\in \Z^N,$ the function
$$\chi_*(Q_n(\alpha_1,...,\alpha_n)): \T^N/W\to \C$$
is given by
$$i^n\cdot \sum_{\sigma\in S_n}\, sn(\sigma)\prod_{i=1}^n (x_{\sigma(i)}^{\alpha_i}-x_{\sigma(i)}^{-\alpha_i}),$$
where, $sn(\sigma)$ is the sign of $\sigma$ and, as before, $x_k^{\alpha_i}=\prod_{j=1}^N x_{kj}^{\alpha_{ij}}.$
\elem

\bpr
$Q_n(\alpha_1,...,\alpha_n)$ is a composition of two functions. The first one sends $(x_{ij})$ to an element in $\T^N\subset G^N$ whose $k$th component is $(z_{k1},...,z_{kn})=(x_1^{\alpha_k},...,x_n^{\alpha_k})\in (\C^*)^n=\T.$
The second one is a complex valued function on $n$-tuples of matrices in $\SO{2n}$ given by \cite[(2)]{S-gen}:

$Q_n(A,...,Z)=\sum_{\sigma\in S_n} sn(\sigma)
 (A_{\sigma(1),\sigma(2)}-A_{\sigma(2),\sigma(1)})\cdot ...\hspace*{.2in}$\vspace*{-.2in}\\
\be{equation}\label{def-Q}
\hspace*{2.8in}  \cdot (Z_{\sigma(n-1),\sigma(n)}-Z_{\sigma(n),\sigma(n-1)}).
\en{equation}
Since the matrices belonging to the maximal torus $\T$ in $\SO{n}$ are built of diagonal blocks
$$A_j=\frac{1}{2}\left(\be{array}{cc} x_j+x_j^{-1} & i(x_j-x_j^{-1})\\ -i(x_j-x_j^{-1}) & x_j+x_j^{-1}\en{array}\right),$$ for $j=1,...,n,$
$Q_n$ restricted to $n$-tuples of elements of $\T=(\C^*)^n$ sends ($z_{ki}$) to
$$i^n\cdot \sum_{\sigma\in S_n} sn(\sigma) (z_{\sigma(1),1}-z_{\sigma(1),1}^{-1})\cdot ...
\cdot (z_{\sigma(n),n}-z_{\sigma(n),n}^{-1}).$$
Hence, the statement follows.
\epr

\blem\label{level_red}
(1) For $G=\GL{n}, \SL{n}, \Sp{n}, \SO{2n+1}$, every element of $\C[\T^N]^W$ of level $>1$ can be expressed
as a polynomial in elements of $\C[\T^N]^W$ of lower level. The same is true for $G=\SO{2n}$, for elements of $\C[\T^N]^W$ level $1<l<n.$\\
(2) If $G=\SO{2n}$ then every element of $\C[\T^N]^W$ of level $n$ can be expressed
as a linear combination of elements
$$\sum_{\sigma\in S_n}\, sn(\sigma)\prod_{i=1}^n (x_{\sigma(i)}^{\alpha_i}-x_{\sigma(i)}^{-\alpha_i}),$$
for $\alpha_1,...,\alpha_n\in \Z^N,$ and of a polynomial in elements of $\C[\T^N]^W$ of level $<l.$
\elem

\bpr
(1) Consider an element of $\C[\T^N]^W$ of level $l.$ Since it is a linear combination of elements
\beq\label{inv-elt}
\sum_{w\in W} w\cdot m,
\eeq
where $m=\prod_{i=1}^n x_i^{\alpha_i}$ have level $\leq l$, it is enough to prove the statement for such elements.
Since $\sum_{w\in W} w\cdot m$ is invariant under any even permutation of indices $i$, we can assume that
$\alpha_i=0$ for $i> l$. We consider the following three cases separately:

($A_n$) If $G=\GL{n}, \SL{n}$ then
$$\sum_{w\in W} w\cdot m=\sum_{w\in S_{n}} \prod_{i=1}^{l} x_{w(i)}^{\alpha_i}.$$
We have
\beq\label{SL-AB}
\sum_{k=1}^n x_{k}^{\alpha_{l}} \cdot \sum_{w\in S_{n}} \prod_{i=1}^{l-1} x_{w(i)}^{\alpha_i}=
A+B,
\eeq
where $A$ is the sum of the monomials $x_k^{\alpha_{l}}\prod_{i=1}^{l-1} x_{w(i)}^{\alpha_i}$ such that
$$k\in \{w(0),...,w(l-1)\}$$ and $B$ is the sum of the remaining ones.
Note that $$B= (n-l) \sum_{w\in S_{n}} \prod_{i=1}^{l} x_{w(i)}^{\alpha_i}$$
is a non-zero multiple of (\ref{inv-elt}).
Since $A$ is an element of $\C[\T^N]^W$ of level $< l$ and the left hand side of (\ref{SL-AB}) is a product
of elements of $\C[\T^N]^W$ of level $< l$, the statement follows.

($B_n$+$C_n$) If $G=\SO{2n+1}$ or $\Sp{n}$ then
$$\sum_{w\in W} w\cdot m=\sum_{w\in S_n}\sum_{\ve_1,...,\ve_n\in \{+1,-1\}} \prod_{i=1}^{l} x_{w(i)}^{\ve_i\cdot \alpha_i}$$
and
\beq\label{SpSO-AB}
\sum_{k=0}^n \left( x_{k}^{\alpha_l}+x_{k}^{-\alpha_l} \right) \cdot \sum_{w\in S_n} \sum_{\ve_1,...,\ve_n\in \{+1,-1\}} \prod_{i=1}^{l-1} x_{w(i)}^{\ve_i\cdot \alpha_i}=
A+B,
\eeq
where $A$ is the sum of the monomials
$$x_k^{\pm \alpha_l}\prod_{i=1}^{l-1} x_{w(i)}^{\pm \alpha_i}$$
such that
$$k\in \{w(0),...,w(l-1)\}$$ and $B$ is the sum of the remaining ones.
Note that $$B= (n-l) \sum_{w\in S_n}\sum_{\ve_1,...,\ve_n\in \{+1,-1\}} \prod_{i=1}^{l} x_{w(i)}^{\ve_i\cdot \alpha_i}$$ is a non-zero multiple of (\ref{inv-elt}).
Since $A$ is an element of $\C[\T^N]^W$ of level $< l$ and the left hand side of (\ref{SpSO-AB}) is a product of elements of $\C[\T^N]^W$ of level $< l$, the statement follows.

($D_n$) Let $G=\SO{2n}$. Since $m$ has level $<n$ and the negation of the sign of a missing variable does not affect $m$,
$$\sum_{w\in W} w\cdot m=\frac{1}{2}\sum_{w\in SS_n} w\cdot m.$$
Therefore the statement follows from the argument for the $(B_n)$ case.

(2) As before, since every element of $\C[\T^N]^W$ of level $n$ is a linear combination of elements
\beq\label{inv-elt2}
\sum_{w\in W} w\cdot m,
\eeq
where $m=\prod_{i=1}^n x_i^{\alpha_i}$ have level $\leq n$, it is enough to prove the statement for elements of level $n$.

Let $\ve(w)=\pm 1$ for $w\in SS_n$ depending on whether the number of sign changes in $w$ is even or odd.
Then $$\sum_{w\in W} w\cdot m=\frac{1}{2}\sum_{w\in SS_n} w\cdot m+\frac{1}{2}\sum_{w\in SS_n} w\cdot \ve(w) m.$$
Since the first summand on the right is $SS_n$ invariant, it is a polynomial in elements of $\C[\T^N]^W$ of lower
level by the argument for ($B_n$) above. The second summand is equal to
$$\frac{1}{2} \sum_{\sigma\in S_n} sn(\sigma) \prod_{i=1}^n (x_{\sigma(i)}^{\alpha_i}-x_{\sigma(i)}^{-\alpha_i}).$$
\epr


%
\section{Proofs of Remark \ref{ir=con}(3) and of Proposition \ref{irred}}
\label{s_proofs2}
%

\noindent {\bf Proof of Remark \ref{ir=con}(3)} following \cite{Th}: For any connected reductive group $G,$ there is an epimorphism $$C^c(G)\times [G,G]\to G,$$
with a finite kernel, where $C^c(G)$ is the connected component of the identity in the center of $G,$ cf. \cite[Prop IV.14.2]{B2},
Since $[G,G]$ is semi-simple, it has a finite cover $G'$ which is simply-connected.
Hence we have a finite extension of $G$:
\begin{equation}\label{nu}
\{1\}\to K \to C^c(G)\times G' \stackrel{\nu}{\longrightarrow} G\to \{1\}.
\end{equation}
By \cite[Thm. C]{Ric}, $Hom(\Z^2,G')$ is irreducible. Since $C^c(G)$ is irreducible,
$$Hom(\Z^2,C^c(G)\times G')=(C^c(G))^2\times Hom(\Z^2,G')$$
is irreducible as well. Let $Hom^c(\Z^2,G)\subset Hom(\Z^2,G)$ be the connected component of the trivial representation and let $$\nu_*: Hom(\Z^2,C^c(G)\times G')\to Hom^c(\Z^2,G)$$ be the morphism induced by $\nu.$
By the lemma below, $Hom^c(\Z^2,G)$ is irreducible. Hence, $X_G^c(\Z^2)$ is irreducible as well and, therefore, it coincides with
$X_G^0(\Z^2).$
\qed\\

\begin{lemma}
$\nu_*: Hom(\Z^2,C^c(G)\times G')\to Hom^c(\Z^2,G)$ is onto.
\end{lemma}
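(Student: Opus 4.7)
The plan is to show that the obstruction to lifting a representation $\rho\colon \Z^2\to G$ through $\nu$ is a class in the finite group $K$ that depends continuously on $\rho$, and hence is constant (equal to the identity) on the connected component of the trivial representation.

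More concretely, let $\rho\in Hom^c(\Z^2,G)$ and let $g_i=\rho(e_i)$ for the standard generators $e_1,e_2$ of $\Z^2$. Choose arbitrary lifts $\tilde g_i\in \nu^{-1}(g_i)\subset C^c(G)\times G'$. Since $\nu$ is a finite cover with kernel $K$, and $K$ lies in the center of $C^c(G)\times G'$, the commutator $c(\rho):=[\tilde g_1,\tilde g_2]$ projects to $[g_1,g_2]=e$ and therefore lies in $K$; moreover centrality of $K$ ensures that $c(\rho)$ is independent of the choice of lifts $\tilde g_i$. Thus $c\colon Hom(\Z^2,G)\to K$ is a well-defined map, and a lift $\tilde\rho\colon\Z^2\to C^c(G)\times G'$ of $\rho$ exists if and only if $c(\rho)=e$.

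Next, I would argue that $c$ is locally constant. Given $\rho$, one can choose lifts $\tilde g_i$ in a Zariski (or analytic) neighborhood of $\rho$ continuously, because $\nu$ is a finite \'etale cover and hence locally admits sections. The commutator then varies continuously in $K$, but $K$ is discrete, so $c$ is locally constant. Consequently $c$ is constant on each connected component of $Hom(\Z^2,G)$. On the component $Hom^c(\Z^2,G)$ containing the trivial representation, the value of $c$ at the trivial representation is $e$, so $c\equiv e$ on all of $Hom^c(\Z^2,G)$.

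Therefore every $\rho\in Hom^c(\Z^2,G)$ admits commuting lifts $\tilde g_1,\tilde g_2\in C^c(G)\times G'$, which extend to a homomorphism $\tilde\rho\colon \Z^2\to C^c(G)\times G'$ with $\nu_*(\tilde\rho)=\rho$. This proves surjectivity of $\nu_*$. The only subtle point is the existence of continuous local sections of $\nu$, which I would justify by noting that $\nu$ is a finite \'etale morphism of algebraic groups, so \'etale-locally (and in particular analytically-locally) on $G$ it splits as a disjoint union of copies of the base.
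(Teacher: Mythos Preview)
Your proof is correct and is essentially the same argument as the paper's, only phrased more concretely: the paper invokes the obstruction class $f^*(\alpha)\in H^2(\Z^2,K)$ coming from the central extension, notes that this group is discrete so the obstruction is locally constant, and observes it vanishes at the trivial representation. Your commutator $c(\rho)=[\tilde g_1,\tilde g_2]\in K$ is precisely this obstruction under the identification $H^2(\Z^2,K)\cong K$, and your local-section argument for local constancy is the hands-on version of the paper's ``$H^2(\Z^2,K)$ is discrete'' step.
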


\begin{proof}
We need to prove that every representation $f$ in $Hom^c(\Z^2,G)$ lifts to a representation $\tilde f: \Z^2\to C^c(G)\times G'$ (i.e. $f=\nu \tilde f$).
Since the extension (\ref{nu}) is finite and central, it defines an element $\alpha\in H^2(G,K)$ such that
$f$ lifts to $\tilde f$ if and only if $f^*(\alpha)=0$ in $H^2(\Z^2,K),$ cf. \cite[Sec. 2]{GM}.
Since $H^2(\Z^2,K)$ is discrete, the property of $f$ being ``liftable" is locally constant on $Hom(\Z^2,G)$
(in complex topology) and, hence, constant on $Hom^c(\Z^2,G).$ Since the trivial representation is liftable, the statement follows.
\end{proof}

The following will be needed for the proof of Proposition \ref{irred}:

\bpro \lb{Borel}
If the image of $\rho: \Z^N\to G$ belongs to a Borel subgroup of $G$ then $\rho$ is equivalent in $X_G(\Z^N)$ to a representation with an image in a maximal torus of $G.$
\epro

\bpr
Any Borel subgroup $B\subset G$ is of the form $\T\cdot U,$ where $\T$ is a maximal torus and $U$ is a unipotent subgroup of $G.$ Let $e_1,...,e_N$ be generators of $\Z^N$ and let $t_i\cdot u_i$ be a decomposition of $\rho(e_i).$
By \cite[Prop. 8.2.1]{Sp}, $U$ is generated by rank $1$ subgroups $U_\alpha$, associated with roots $\alpha$ which are positive with respect to some ordering. Furthermore, it follows from \cite[Prop. 8.1.1]{Sp}, there exists a sequence $s_1, s_2,...\in \T$ which conjugates $u_1,...,u_N$ to elements arbitrarily close to $\T.$ Consequently, $s_n\rho(e_i)s_n^{-1}\to t_i$ as $n\to \infty$ for every $i=1,...,n.$ Equivalence classes of representations in $Hom(\Z^N,G)$ are closed in Zariski topology and, hence, in complex topology as well. Therefore $\rho$ is equivalent to $\rho'$ sending $e_i$ to $t_i$ for every $i.$
\epr

\noindent {\bf Proof of Proposition \ref{irred}:}
Let $G=\GL{n}$ or $\SL{n}.$ Since the matrices $\rho(e_1),\rho(e_2),...,\rho(e_N)\in G$ commute, they can be simultaneously conjugated to upper triangular ones and, hence, they lie in a Borel subgroup of $G.$ Now the statement follows from Proposition \ref{Borel}.

The same holds for $G=\Sp{n}:$ Recall that a subspace $V$ of a symplectic space $\C^{2n}$ is isotropic if the symplectic form restricted to $V$ vanishes. A stabilizer of any complete flag
$\{0\}=V_0\subset V_1\subset ...\subset V_n$ of isotropic subspaces of $\C^{2n}$ is a Borel subgroup of $\Sp{n},$ cf. \cite[Ch. 10]{GW}. Therefore, to complete the proof, it is enough to show the existence of a complete isotropic flag preserved by $\rho(\Z^N).$ We construct it inductively. Let $V_0=\{0\}.$ Suppose that $V_k$ is defined already. Then $\rho(\Z^N)$ preserves $V_k^\perp$.
Since any number of commuting operators on a complex vector space preserves a $1$-dimensional subspace, there is such subspace  $W\subset V_k^\perp/V_k,$ as long as $V_k$ is not a maximal isotropic subspace. Let $V_{k+1}=\pi^{-1}(W)$ then, where $\pi$ is the projection $V_k^\perp\to V_k^\perp/V_k.$
\qed \ \\

%
\section{Proof of Theorem \ref{slice}, Corollary \ref{T=H}, and Proposition \ref{om=om'}}
\label{s_proofs3}
%

\noindent {\bf Proof of Theorem \ref{slice}:}
(1) The argument of the proof of Theorem \ref{main}(1) shows that (\ref{e-T}) is an equality and, therefore, $\rho$ is a simple point of $Hom(\Z^N,G)$. By  \cite[II \S2 Thm 6]{Sh}, $\rho$ belongs to a unique component.

(2) Consider the map $\lambda: X_G(\Z^N)\to X_G(\Z)\times ...\times X_G(\Z),$ sending $[\rho]$ to the $N$-tuple $([\rho(e_1)],...,[\rho(e_N)])$. Since the composition
$$\T^N\to \T^N/W\xrightarrow{\chi} X_G(\Z^N)\xrightarrow{\lambda} X_G(\Z)\times ...\times X_G(\Z)$$
is the Cartesian product of the maps $\T\to X_G(\Z)=\T/W,$ its differential is onto. Hence $d\chi$ has rank $N\cdot rank\, G$, which implies that $d\chi$ is $1$-$1$. By (1) and by Theorem \ref{main}(1), $dim\, T_\rho\, X_G(\Z^N)=N\cdot rank\, G.$ Therefore, $d\chi$ is an isomorphism.

(3) follows from \cite[Prop. 4.18]{Dr} (cf. the argument of the proof of Luna \'Etale Slice Theorem in \cite{Dr}).
\qed\ \\

\noindent {\bf Proof of Corollary \ref{T=H}:} By Lemma \ref{H^1-Z^N}(2), there is a natural identification of $\t^N=H^1(\Z^N, \t)$ with $H^1(\Z^N, Ad\, \rho).$
Since the resulting isomorphism
$$H^1(\Z^N, Ad\, \rho)\to \t^N\to T_\rho\, \T^N/W\to T_\rho\, X_G(\Z^N)$$ coincides with (\ref{e_H^1-T}), the statement follows.
\qed\ \\

\noindent {\bf Proof of Proposition \ref{om=om'}:}
(1) By Lemma \ref{H^1-Z^N}, $H^1(\Z^2,Ad\, \rho)=H^1(\Z^2,\t)$ for $[\rho]\in X_G'(\Z^2)$ (i.e. in the domain of $\omega$). Since the cup product
$$H^1(\Z^2,\t)\times H^1(\Z^2,\t)\stackrel{\cup}{\longrightarrow} H^2(\Z^2,\t\otimes \t)=\t\otimes \t$$
sends $(v_1,w_1),(v_2,w_2)$ to $v_1\otimes w_2 -v_2\otimes w_1,$
 the statement follows.

(2a) Any triple of vectors in any tangent space to $\T\times \T$
extends to invariant vector fields $X_1,X_2,X_3$ on $\T\times \T.$
Since $d\omega'(X_1,X_2,X_3)$ is a linear combination of terms $X_i(\omega'(X_j,X_k))$ and
$\omega'([X_i,X_j],X_k),$ it vanishes for such fields. Therefore, $\omega'$ is closed. Being non-degenerate, it is also symplectic.

(2b) Since $\chi^*(d\omega)=d(\chi^*\omega)=d\omega'=0,$ and $\chi^*$ (being a normalization map) is an isomorphism of tangent spaces on a Zariski dense subset of $X_G'(\Z^N)$, $d\omega=0$ on $X_G'(\Z^N)$.
By its construction, $\omega$ is non-degenerate and, hence, symplectic.
\qed\ \\

%
\section{Proof of Proposition \ref{G-brack}.}
\lb{s_proof_symp}
%

In the statement below, the notion of Goldman bracket refers to the Poisson bracket dual to the holomorphic Goldman symplectic form defined by (\ref{e-omega}), where $\frak B=c\cdot \frak T$, $c\in \C^*$ and $\frak T$ is the trace form, as in Sec. \ref{s_symp}.

\bpro \lb{Goldman-formulas}
The following formulas hold for Goldman brackets for all closed orientable surfaces of genus $\geq 1$:\\
(1) For $G=\SL{n},$
\begin{equation}\label{sln_brack}
\{\tau_\alpha,\tau_\beta\}= \frac{1}{c} \sum_{p\in \alpha \cap \beta} \ve(p,\alpha,\beta) \left(\tau_{\alpha_p\beta_p}- \frac{\tau_{\alpha}\tau_{\beta}}{n}\right),
\end{equation}
where $\alpha,\beta$ are any smooth closed oriented loops in $F$ in general position.
(We identify closed oriented loops in $F$ with conjugacy classes in $\pi_1(F).$)
$\alpha \cap \beta$ is the set of the intersection points and
$\alpha_p\beta_p$ is the product of $\alpha$ and $\beta$ in $\pi_1(F,p),$
and $\ve(p,\alpha,\beta)$ is the sign of the intersection:\vspace*{.1in}

\centerline{\parbox{2in}{\psfig{figure=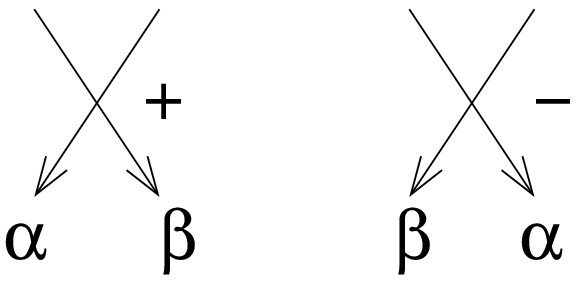,height=.5in}}}
\vspace*{.15in}

(2) For $G=\SO{n}, \Sp{n},$
\begin{equation}\label{spso_brack}
\{\tau_{\alpha},\tau_{\beta}\}= \frac{1}{2c} \sum_{p\in \alpha \cap \beta} \ve(p,\alpha,\beta)
\left(\tau_{\alpha_p\beta_p}-\tau_{\alpha_p\beta_p^{-1}}\right).
\end{equation}
\epro

Since the signed number of the intersection points between any two curves $(p,q)$, $(r,s)$ in a torus is $\left|\begin{array}{cc} p & q \\ r & s\\ \end{array}\right|,$ the above statement immediately implies Proposition \ref{G-brack}.

The proof of Proposition \ref{Goldman-formulas} uses the notion of variation function introduced in \cite{Go2}.
Let $F:G\to \g$ be the variation function with respect to $\frak B=c\cdot \frak T,$ $c\in \C^*.$

\blem \lb{l_var}
Consider the standard embeddings $\SL{n},\SO{n}\subset \GL{n},$ $\Sp{n}\subset \GL{2n},$ and the induced embeddings of Lie algebras. Then\\
(1) $F:\SL{n}\to \sl{n}\subset \M{n}$ is given by $F(A)=\frac{1}{c}(A-\frac{Tr(A)}{n}I)$\\
(2) 
$F:\SO{n}\to \so{n} \subset \M{n}$ is given by $F(A)=\frac{1}{2c}(A-A^{-1})$, and\\
(3) $F:\Sp{n}\to \sp{n} \subset \M{2n}$ is given by
$F(A)=\frac{1}{2c}(A-A^{-1})$.
\elem

\bpr By its definition, the variation function with respect to $c\cdot \frak T,$ is $c^{-1}$ times the variation function with respect to $\frak T.$ Therefore, it is enough to prove the statement for $c=1.$

It is easy to see that the following ``complex" version of \cite[Sec 1.4]{Go2} holds: In the above setting, the variation function is given by the composition $$G\to \GL{n}\to \M{n} \xrightarrow{pr} \g,$$
where $pr$ is the orthogonal projection with respect to $\frak T.$ Indeed, Goldman's proof of the ``real" version carries over the complex case. Now the statement
follows from computations like those of Corollaries 1.8 and 1.9 of \cite{Go2}.
\epr

\noindent{\bf Proof of Proposition \ref{Goldman-formulas}:}
By Goldman's Product Formula, \cite{Go2},
$$\{\tau_{\alpha},\tau_{\beta}\}([\rho])= \sum_{p\in \alpha \cap \beta}
\ve(p; \alpha,\beta)  \frak B(F_{\alpha_p}(\rho_p),F_{\beta_p}(\rho_p)),$$
where $\rho_p$ is the $G$-representation of $\pi_1(F,p)$ which belongs to the conjugacy class $[\rho].$

By Lemma \ref{l_var}(1), for $G=\SL{n},$
$$ \frak B(F_{\alpha_p}(\rho_p),F_{\beta_p}(\rho_p))=$$
$$c\cdot Tr\left(\frac{1}{c^2}\left(\rho_p(\alpha_p)-\frac{Tr(\rho_p(\alpha_p))}{n}I\right)
\left(\rho_p(\beta_p)-\frac{Tr(\rho_p(\beta_p))}{n}I\right)\right)=$$
$$\frac{1}{c} \left(Tr(\rho_p(\alpha_p\beta_p))-\frac{Tr(\rho_p(\alpha_p))Tr(\rho_p(\beta_p))}{n}\right)$$
and Proposition \ref{Goldman-formulas}(1) follows.

An analogous computation using Lemma \ref{l_var}(2) and (3) implies part (2).
\qed

%


\begin{thebibliography}{9999}
%

\bibitem[B1]{B1} A. Borel, Sous-groupes commutatifs et torsion des groupes de Lie compacts connexes,
{\em Tohoku Math. J.} {\bf 13} (1961), no. 2, 216--240.

\bibitem[B2]{B2} A. Borel, Linear Algebraic Groups, 2nd ed., Graduate Texts in Mathematics, Springer-Verlag, 1991.

\bibitem[BFM]{BFM} A. Borel, R. Friedman, J.W. Morgan, Almost commuting elements in compact Lie groups, arXiv:math/9907007v1[math.GR]

\bibitem[Br]{Br} K. S. Brown, Cohomology of Groups, Graduate Texts in Mathematics, Springer 1982.



\bibitem[Dr]{Dr} J.-M. Dr\'ezet, Luna's slice theorem and applications, 23rd Autumn School in Algebraic Geometry Algebraic group actions and quotients, Wykno (Poland), September  2000.

\bibitem[FL]{FL} C. Florentino, S. Lawton, Topology of character varieties of abelian groups, arXiv:math/1301.7616



\bibitem [FH]{FH} W. Fulton, J. Harris, Representation theory. A first course, Graduate Texts in Mathematics, Springer, 1991.


\bibitem[Go1]{Go1} W. Goldman, The symplectic nature of fundamental groups of surfaces,
{\em Adv. in Math.} {\bf 54} (1984), 200--225.

\bibitem [Go2] {Go2} W. Goldman, Invariant functions on Lie groups and
Hamiltonian flows of surface group representations, {\em Invent. Math.}
{\bf 85} (1986), 263-302.



\bibitem [GM] {GM} F. Gonzalez-Acuna, J. M. Montesinos-Amilibia,
On the character variety of group representations in $SL(2,\C)$ and
$PSL(2,\C),$ {\em Math. Z.} {\bf 214} (1993), 627-652.

\bibitem[GW]{GW} R. Goodman, N. R. Wallach, Representations and invariants of the classical groups, Cambridge Univ. Press, Cambridge, 1998.


\bibitem[Hu]{Hu} J.E. Humphreys, Linear algebraic groups, Graduate Texts in Math. 21, Springer, 1975.

\bibitem[KS]{KS} V. G. Kac and A. V. Smilga, Vacuum structure in supersymmetric Yang-Mills
theories with any gauge group. In The many faces of the superworld, pp.
185--234. World Sci. Publ., River Edge, NJ, 2000.


\bibitem[Ka]{Ka} M. Kara\'s, Geometric degree of finite extensions of projections, Universitis Iagellonicae Acta Math, 1999, 109--119.

\bibitem[Li]{Li} J. Li, The space of surface group representations, {\em Manuscripta Math.} {\bf 78} (1993) 223--243.

\bibitem [LM] {LM} A. Lubotzky, A. Magid, Varieties of representations of
finitely generated groups, {\em Memoirs of the AMS} {\bf 336} (1985).


\bibitem [Mu] {Mu} D. Mumford, Algebraic Geometry I, Complex Projective Varieties, 2nd ed, Springer 1976.



\bibitem[PV]{PV} V.L. Popov, E.B. Vinberg, Invariant Theory, in Algebraic Geometry IV,  Encyclopaedia of Mathematical Sciences, Vol. 55, A.N. Parshn, I.R. Shafarevich, Eds., Springer





\bibitem[Ric]{Ric} R.W. Richardson, Commuting varieties of semisimple Lie algebras and algebraic groups,
{\em Compositio Math.} {\bf 38} (1979), 311--327.


\bibitem[Th]{Th} M. Thaddeus, Mirror symmetry, Langlands duality,
and commuting elements of Lie groups, {\em Internat. Math. Res. Notices} (2001), no. 22, 1169--1193,
arXiv:math.AG/0009081.

\bibitem[Sc]{Sc} C. Schweigert, On moduli spaces of flat connections with non-simply connected
structure group, {\em Nuclear Phys. B} {\bf 492} (1997) no. 3, 743--755, arXiv:hep-th/9611092v1

\bibitem[S1]{S-qt} A.S. Sikora, Quantizations of Character Varieties and Quantum Knot Invariants, arXiv:0807.0943

\bibitem[S2]{S-char} A.S. Sikora, Character varieties, {\em Trans. of A.M.S.} {\bf 364} (2012) 5173--5208, arXiv:0902.2589[math.RT]

\bibitem[S3]{S-gen} A.S. Sikora, Generating sets for coordinate rings of character varieties, {\em J. Pure Appl. Algebra} {\bf  217} (2013) no. 11, 2076--2087, arXiv:1106.4837[math.RT]

\bibitem[Sh]{Sh} I. R. Shafarevitch, Foundations of Algebraic Geometry, Russian Mathematical Surveys, Volume 24,
Issue 6, pp. 1--178 (1969).

\bibitem[Sp]{Sp} T.A. Springer, Linear Algebraic Groups, 2nd ed., Progress in Mathematics, Birkh\"auser, 1998.

\bibitem[SS]{SS} T. Springer, R. Steinberg, Conjugacy classes, in {\em Seminar in Algebraic Groups and Related Finite Groups,} ed. by A. Borel et al., Lecture Notes in Mathematics 131, Springer-Verlag, Berlin-Heidelberg-New York, 1970.
\bibitem[St]{St} R. Steinberg, Regular elements of semisimple algebraic groups, {\em Inst. Hautes \'Etudes Sci. Publ. Math.} {\bf 25} (1965) 49–80.

\end{thebibliography}
\end{document}